\documentclass[11pt, a4paper, english]{amsart}
\usepackage{amsmath} 
\usepackage{amsthm} 
\usepackage{amssymb} 
\usepackage{amscd} 
\usepackage{mathtools}
\usepackage{array} 

\usepackage[dvipsnames]{xcolor}
\usepackage[mathscr]{eucal} 
\usepackage[backref=page]{hyperref} 

\usepackage{caption} %
\usepackage{graphics,graphicx} 
\usepackage{tikz,tikz-cd} 
\usepackage{listings}
\usetikzlibrary{graphs,graphs.standard,calc}

\usetikzlibrary{decorations.pathreplacing,}

\usepackage{enumerate} 

\usepackage{newtxtext,newtxmath}
\usepackage[margin=1.25in]{geometry}
\linespread{1.15}

\usepackage{verbatim}
\makeatletter
\@namedef{subjclassname@2020}{\textup{2020} Mathematics Subject Classification}
\makeatother

\DeclareMathAlphabet{\mathbbb}{U}{bbold}{m}{n}
\hypersetup{
    colorlinks = true,
    linkbordercolor = {white},
    linkcolor = {BrickRed},
    anchorcolor = {black},
    citecolor = {blue},
    filecolor = {BrickRed},
    menucolor = {red},
    runcolor = {BrickRed},
    urlcolor = {magenta}
}

\lstset{frame=none,
  language=Python,
  aboveskip=3mm,
  belowskip=3mm,
  showstringspaces=false,
  columns=flexible,
  basicstyle={\small\ttfamily},
  numbers=none,
  numberstyle=\tiny\color{gray},
  keywordstyle=\color{blue},
  commentstyle=\color{purple},
  stringstyle=\color{red},
  breaklines=true,
  breakatwhitespace=true,
  tabsize=3
}

\usetikzlibrary{automata}

\newtheoremstyle{teoremas}
{13pt}
{13pt}
{\itshape}
{}
{\bfseries}
{}
{.5em}
{}

\theoremstyle{teoremas}
\newtheorem{teo}{Theorem}[section]

\newtheorem{cor}[teo]{Corollary}

\newtheorem{lemma}[teo]{Lemma}

\newtheorem{prop}[teo]{Proposition}

\newtheoremstyle{definition}
{12pt}
{12pt}
{}
{}
{\bfseries}
{}
{.5em}
{}

\theoremstyle{definition}
\newtheorem{defi}[teo]{Definition}
\newtheorem{conj}[teo]{Conjecture}
\newtheorem{example}[teo]{Example}
\newtheorem{remark}[teo]{Remark}

\DeclareMathOperator{\rk}{rk}

\newcommand{\M}{\mathsf{M}}
\newcommand{\N}{\mathsf{N}}
\newcommand{\U}{\mathsf{U}}
\newcommand{\T}{\mathsf{T}}

\newcommand{\rank}{\operatorname{rk}}
\newcommand{\cl}{\operatorname{cl}}
\newcommand\size[1]{|#1|}
\title{The Merino--Welsh Conjecture for split matroids}

\author[L. Ferroni]{Luis Ferroni}
\thanks{}

\address{
  Department of Mathematics, KTH Royal Institute of Technology, Stockholm, Sweden
}
\email{ferroni@kth.se}

\author[B. Schr\"oter]{Benjamin Schr\"oter}
\address{
Institut für Mathematik, Goethe-Universität, Frankfurt, Germany
}
\email{schroeter@math.uni-frankfurt.de}

\subjclass[2020]{05B35, 05C31}

\allowdisplaybreaks

\begin{document}

\begin{abstract}
    In 1999 Merino and Welsh conjectured that evaluations of the Tutte polynomial of a graph satisfy an inequality.
    In this short article we show that the conjecture generalized to matroids holds for the large class of all split matroids by exploiting the structure of their lattice of cyclic flats. This class of matroids strictly contains all paving and copaving matroids.
\end{abstract}

\maketitle

\section{Introduction}
\noindent In 1999 Merino and Welsh \cite[Conjecture 7.1]{merino-welsh} posed the following conjecture.

\begin{conj}\label{conj:merino-welsh-graphs-max}
    For every connected graph $G$ without loops and bridges, the following inequality holds.
    \[
        \max(\alpha(G), \alpha^*(G)) \geq \tau(G),
    \]
    where $\tau(G)$ denotes the number of spanning trees of $G$,
    $\alpha(G)$ the number of acyclic orientations of~$G$, and 
    $\alpha^*(G)$ the number of totally cyclic orientations of $G$.
\end{conj}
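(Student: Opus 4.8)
The plan is to translate everything into the Tutte polynomial $T(G;x,y)$ of the cycle matroid of $G$ and prove a polynomial inequality. The standard evaluations give $\tau(G)=T(G;1,1)$, while the theorems of Stanley and Las Vergnas identify $\alpha(G)=T(G;2,0)$ and $\alpha^*(G)=T(G;0,2)$; the conjecture is thus $\max\bigl(T(G;2,0),T(G;0,2)\bigr)\ge T(G;1,1)$. Since all three quantities are positive for a loopless bridgeless connected graph, the arithmetic–geometric mean inequality shows that it suffices to prove the stronger multiplicative form
\[
  T(G;2,0)\cdot T(G;0,2)\ \ge\ T(G;1,1)^2 .
\]
This form is self-dual, since $T_{M^*}(x,y)=T_M(y,x)$ fixes both sides, so the graphic and cographic worlds are treated symmetrically.

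Next I would reduce to $2$-connected graphs. The cycle matroid $M(G)$ is the direct sum of the cycle matroids of the blocks of $G$, so the Tutte polynomial is multiplicative over blocks, $T_G=\prod_i T_{G_i}$, and each of the three evaluations factors accordingly. Because the product inequality for each block $G_i$ multiplies up to the product inequality for $G$, it is enough to establish it when $G$ is $2$-connected, and with more effort one would push through $2$-sums via Brylawski's formula toward the $3$-connected case.

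For the core case the natural attempt is induction by deletion–contraction. Pick an edge $e$ that is neither a loop nor a bridge, so that $T_G=T_{G\setminus e}+T_{G/e}$, and abbreviate the evaluations at $(2,0),(0,2),(1,1)$ of $G\setminus e$ by $a,b,c$ and of $G/e$ by $a',b',c'$. Then
\[
  T_G(2,0)\,T_G(0,2)=(a+a')(b+b')=ab+a'b'+(ab'+a'b)\ \ge\ ab+a'b'+2\sqrt{ab\cdot a'b'},
\]
and if the inductive bounds $ab\ge c^2$ and $a'b'\ge c'^2$ were available they would combine to give $T_G(2,0)\,T_G(0,2)\ge c^2+c'^2+2cc'=(c+c')^2=T_G(1,1)^2$, closing the induction.

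The hard part is that this induction does not actually close, and it is exactly this point that keeps the conjecture open. Deletion and contraction do not preserve the hypotheses: $G\setminus e$ may acquire a bridge and $G/e$ may acquire a loop, and in either case one of the two factors $T(\,\cdot\,;2,0)$, $T(\,\cdot\,;0,2)$ collapses to $0$ while $T(\,\cdot\,;1,1)\ge 1$ remains positive, so the multiplicative inequality simply fails for the offending piece and the bound $ab\ge c^2$ or $a'b'\ge c'^2$ is unavailable. There need not be any edge whose deletion and contraction both stay loopless and bridgeless, so no local choice repairs the recursion, and one is forced to argue about $G$ globally. A more combinatorial alternative is to prove $\tau(G)^2\le\alpha(G)\,\alpha^*(G)$ bijectively, by injecting ordered pairs of spanning trees into pairs consisting of an acyclic orientation and a totally cyclic orientation, for which the Gioan–Las Vergnas active bijection between spanning trees and orientation classes is the natural tool; but here the two constraints — acyclicity of the first orientation and total cyclicity of the second — are coupled through the chosen pair of trees, and reconciling them is the same unresolved difficulty in disguise. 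This is precisely why the present article does not attack the full conjecture head-on, but instead isolates the class of split matroids, whose lattice of cyclic flats is simple enough that the multiplicative inequality can be verified directly, bypassing the global obstruction above.
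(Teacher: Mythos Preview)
The statement you were asked to prove is a \emph{conjecture}, not a theorem: the paper does not prove it, and indeed it remains open in general. There is therefore no ``paper's own proof'' to compare against. Your proposal is not a proof either, and to your credit you say so explicitly: you outline the natural deletion--contraction induction for the multiplicative form, and then correctly identify the obstruction --- that $G\setminus e$ may acquire a bridge or $G/e$ a loop, killing one of the factors $T(\,\cdot\,;2,0)$ or $T(\,\cdot\,;0,2)$ and breaking the inductive hypothesis, with no guarantee that any edge avoids both problems.

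Your diagnosis of the difficulty is accurate and matches the structure of the paper's actual contribution. The paper does not attempt Conjecture~\ref{conj:merino-welsh-graphs-max} in full; instead it proves the multiplicative inequality for the restricted class of split matroids (Theorem~\ref{teo:main}). The point of Proposition~\ref{prop:base-case} is precisely to handle the bad case you isolate: within the class of connected split matroids, if \emph{every} element $e$ has the property that $\M/\{e\}$ or $\M\setminus\{e\}$ acquires a loop or coloop, then $\M$ is forced to have rank or corank at most~$2$, or to be a minimal matroid $\T_{k,n}$ --- and for these base cases the inequality is checked directly. So the paper's strategy is exactly your deletion--contraction induction, but with the class restricted so that the ``no good edge'' situation is completely classified and disposed of. Your write-up would be improved by stating up front that the target is a conjecture and that what follows is an analysis of the obstruction, rather than framing it as a proof plan that later collapses.
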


The three invariants appearing in this conjecture are evaluations of the Tutte polynomial of $G$, which we denote by $T_G(x,y)$. More precisely, we have that $\tau(G)=T_G(1,1)$, $\alpha(G)=T_G(2,0)$ and $\alpha^*(G) = T_G(0,2)$. This point of view allows to extend the preceding conjecture to all matroids. Furthermore, Conde and Merino strengthened the Conjecture in \cite{conde-merino}, by proposing an ``additive'' and a ``multiplicative'' version of the conjecture for graphs. Generalized to matroids the three versions read as follows.

\begin{conj}\label{conj:mw}
    Let $\M$ be a matroid without loops and coloops. Then
    \begin{align}
        \max\left(T_{\M}(2,0), T_{\M}(0,2)\right) &\geq T_{\M}(1,1),\label{eq:merinoWelshMax}\\
        T_{\M}(2,0) + T_{\M}(0,2) &\geq 2\,T_{\M}(1,1),\label{eq:merinoWelshAdd}\\
        T_{\M}(2,0)\cdot T_{\M}(0,2) &\geq T_{\M}(1,1)^2.\label{eq:merinoWelsh}
    \end{align}
\end{conj}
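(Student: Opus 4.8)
The plan is to prove the strongest of the three inequalities and deduce the others for free. Since for positive reals $a,b,c$ one has $\max(a,b)\ge\sqrt{ab}$ and $\tfrac{a+b}{2}\ge\sqrt{ab}$, the multiplicative inequality \eqref{eq:merinoWelsh} implies both \eqref{eq:merinoWelshMax} and \eqref{eq:merinoWelshAdd}. So I would concentrate entirely on $T_M(2,0)\,T_M(0,2)\ge T_M(1,1)^2$. A first reduction is to connected matroids: the Tutte polynomial is multiplicative under direct sums, so if $M=M_1\oplus M_2$ then each evaluation factors as $a=a_1a_2$, $b=b_1b_2$, $c=c_1c_2$, and $a_1b_1\ge c_1^2$ together with $a_2b_2\ge c_2^2$ yields $ab\ge c^2$. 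One would still have to treat $2$-sum decompositions, where the Tutte polynomial obeys a bilinear gluing formula rather than a plain product, in order to push the reduction down to the $3$-connected case.

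The key idea is to recast the multiplicative inequality as a log-convexity statement. The three points $(2,0)$, $(1,1)$, $(0,2)$ are collinear, lying on the line $x+y=2$ with $(1,1)$ as the midpoint. Setting $f(t)=T_M(1+t,1-t)$, the inequality \eqref{eq:merinoWelsh} becomes exactly the midpoint log-convexity $f(1)\,f(-1)\ge f(0)^2$. I would expand $f$ through the basis–activity form $T_M(x,y)=\sum_{B}x^{\,i(B)}y^{\,e(B)}$, the sum running over bases $B$ with $i,e$ the internal and external activities. Then $f(0)=T_M(1,1)=b(M)$ is the number of bases, while $f(1)=T_M(2,0)=\sum_{e(B)=0}2^{\,i(B)}$ and $f(-1)=T_M(0,2)=\sum_{i(B)=0}2^{\,e(B)}$, so the goal becomes the concrete inequality
\[
\Bigl(\sum_{e(B)=0}2^{\,i(B)}\Bigr)\Bigl(\sum_{i(B)=0}2^{\,e(B)}\Bigr)\;\ge\;b(M)^2 .
\]

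To establish this last inequality I would search for a combinatorial pairing. Bases with $e(B)=0$ (no external activity) and bases with $i(B)=0$ (no internal activity) sit at the two extremes, and the self-dual shape of the inequality suggests building an injection from pairs of arbitrary bases into pairs consisting of one externally inactive and one internally inactive basis, compatible with the activity weights. The structural engine for this is the lattice of cyclic flats: the activity of an element can be read off from the chains of cyclic flats through it, and one would try to localize the enumeration of active bases to the covering relations of that lattice, exactly as the present paper does for split matroids whose cyclic-flat lattice is especially simple.

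The main obstacle is that none of these steps closes in full generality. Deletion–contraction, the usual workhorse for Tutte-polynomial inequalities, does not respect \eqref{eq:merinoWelsh}: passing to $M\setminus e$ and $M/e$ can move a loopless and coloopless matroid out of the admissible class, and the three evaluations recombine with opposite monotonicities. Moreover the activity expansion presents $f(t)$ as a sum of terms $(1+t)^{i}(1-t)^{e}$, each of which is log-\emph{concave} on $(-1,1)$, so log-convexity of the sum cannot be obtained term by term and must come from global cancellation. Consequently the cyclic-flat argument only goes through when that lattice is sufficiently rigid, which is precisely the defining feature of split matroids and the reason the theorem here is proved for that class; removing the split hypothesis is exactly the content of the conjecture that remains open for arbitrary matroids.
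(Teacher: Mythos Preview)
The statement you were asked to prove is Conjecture~\ref{conj:mw}, and it is stated in the paper precisely as a \emph{conjecture}: the paper does not prove it. What the paper proves is Theorem~\ref{teo:main}, the special case where $\M$ is a split matroid. So there is no ``paper's own proof'' of this statement to compare against.

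Your proposal is likewise not a proof, and to your credit you say so explicitly in the final paragraph. The first two paragraphs are sound and match what the paper records: the chain of implications \eqref{eq:merinoWelsh} $\Rightarrow$ \eqref{eq:merinoWelshAdd} $\Rightarrow$ \eqref{eq:merinoWelshMax} via AM--GM is exactly the ``elementary argument'' the paper alludes to after the conjecture, and the reduction to connected matroids via multiplicativity of the Tutte polynomial is Lemma~\ref{lemma:duals}. The reformulation as midpoint log-convexity of $f(t)=T_{\M}(1+t,1-t)$ is also correct and well known.

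The genuine gap is the third paragraph. ``Search for a combinatorial pairing'' and ``build an injection \ldots\ compatible with the activity weights'' are aspirations, not arguments; no such injection is constructed, and none is known in general. Your remark that each summand $(1+t)^{i}(1-t)^{e}$ is log-concave on $(-1,1)$, so that log-convexity of $f$ cannot come term by term, is exactly the obstruction that makes the conjecture hard. The deletion--contraction difficulty you mention is also real, though the paper partially tames it (Lemma~\ref{lemma:deletion-contraction}): if for \emph{some} element $e$ both $\M\setminus\{e\}$ and $\M/\{e\}$ are loopless and coloopless and satisfy \eqref{eq:merinoWelsh}, then so does $\M$. The whole point of the paper's Proposition~\ref{prop:base-case} is to classify the connected split matroids for which no such $e$ exists, reducing the base cases to rank/corank $\le 2$ and minimal matroids. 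That classification uses the clutter structure of the cyclic flats in an essential way and has no analogue for arbitrary matroids, which is why the general statement remains open.
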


An elementary argument and the non-negativity of the expressions above shows that \eqref{eq:merinoWelsh} implies~\eqref{eq:merinoWelshAdd}, which in turn implies \eqref{eq:merinoWelshMax}.

These inequalities have been discussed and established for specific classes of graphs and matroids. Particularly regarding graphs, Thomassen \cite{thomassen} showed that if a graph is either ``sparse'' or ``dense'' then it satisfies Conjecture \ref{conj:merino-welsh-graphs-max}. Furthermore, Noble and Royle \cite{noble-royle} proved that series-parallel graphs satisfy the multiplicative version of Conjecture \ref{conj:mw}, whereas Jackson \cite{jackson} proved several inequalities that have a resemblance to \eqref{eq:merinoWelsh}, but his method does not extend to our case.

Strictly within the scope of the matroidal version of the conjecture, Knauer et al.\ proved in \cite{knauer-martinez-ramirez} that the class of lattice path matroids satisfy the multiplicative version of Conjecture~\ref{conj:mw}. The key feature of their method is a clever way of setting up an induction where they manage to keep control over the base cases.

In \cite{chavez-merino-noble} Ch\'avez-Lomel\'i et al.\ proved a related conjecture \cite[Conjecture 2.4]{conde-merino} for all paving matroids without coloops. Namely, they showed that the Tutte polynomial of such a matroid is a convex function on the segment $x+y=2$ within the positive quadrant. This yields immediately that all such matroids satisfy the additive version~\eqref{eq:merinoWelshAdd} of the inequality. The relevance of this class of matroids comes from a conjecture \cite[Conjecture 1.6]{mayhew} posed by Mayhew et al.\ asserting that paving matroids are asymptotically predominant within all matroids.

The purpose of this short article is to show that the class of split matroids satisfies all of Conjecture \ref{conj:mw}. This class of matroids, introduced by Joswig and Schr\"oter in \cite{joswig-schroter} is minor closed and strictly larger than both the classes of paving and copaving matroids. It originates in the study of splits of hypersimplices, i.e., the base polytopes of uniform matroids, and the tropical Grassmannian. The main result of this note is the following.

\begin{teo}\label{teo:main}
    Let $\M$ be a split matroid without loops and coloops. Then
        \[ T_{\M}(2,0)\cdot T_{\M}(0,2) \geq T_{\M}(1,1)^2.\]
\end{teo}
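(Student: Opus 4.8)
The plan is to reduce the statement to connected matroids and then exploit the classification of connected split matroids through their lattice of cyclic flats. First I would reduce to the connected case: writing $\M = \M_1 \oplus \cdots \oplus \M_t$ for the decomposition into connected components, each $\M_i$ is a minor of $\M$ and hence split, and since $\M$ has no loops and no coloops every $\M_i$ has at least two elements, so each $\M_i$ is automatically loopless and coloopless (a loop or a coloop is its own component). Since the Tutte polynomial is multiplicative over direct sums, $T_{\M}(2,0)\,T_{\M}(0,2) = \prod_i T_{\M_i}(2,0)\,T_{\M_i}(0,2)$ and $T_{\M}(1,1)^2 = \prod_i T_{\M_i}(1,1)^2$, so it suffices to treat connected $\M$. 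From now on assume $\M$ is connected of rank $r$ on $n$ elements, with $0 < r < n$.

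The base case is the uniform matroid $\M = U_{r,n}$. From the expansion
\[
    T_{U_{r,n}}(x,y) = \sum_{j=0}^{r-1}\binom nj (x-1)^{r-j} + \binom nr + \sum_{j=r+1}^{n}\binom nj (y-1)^{j-r}
\]
one reads off $T_{U_{r,n}}(1,1) = \binom nr$, while $T_{U_{r,n}}(2,0)$ and $T_{U_{r,n}}(0,2) = T_{U_{n-r,n}}(2,0)$ are the corresponding alternating binomial sums; I would then check $T_{U_{r,n}}(2,0)\,T_{U_{r,n}}(0,2) \geq \binom nr^2$ directly, noting that equality holds exactly when $n \leq 2$.

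For a non-uniform connected split matroid $\M$ I would use that its proper nonempty cyclic flats $Z_1,\dots,Z_k$ ($k \geq 1$) form an antichain whose types $(\rk Z_i, \size{Z_i})$, together with $(r,n)$, satisfy the compatibility conditions characterizing connected split matroids; in particular the $Z_i$ pairwise meet in independent sets of prescribed rank. The key consequence of this structure is that the deviation of $\M$ from the uniform matroid is localized at and additive over the $Z_i$: one obtains
\[
    T_{\M}(x,y) = T_{U_{r,n}}(x,y) - \sum_{i=1}^{k} D_i(x,y),
\]
where $D_i$ depends only on $r$, $n$ and $(\rk Z_i, \size{Z_i})$ and is precisely the correction term already occurring for the connected split matroid $\N_i$ whose unique proper nonempty cyclic flat is $Z_i$. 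Setting $(A,B,C) = (T_{U_{r,n}}(2,0),\,T_{U_{r,n}}(0,2),\,\binom nr)$ and $a_i = D_i(2,0) \geq 0$, $b_i = D_i(0,2) \geq 0$, $c_i = D_i(1,1) \geq 0$, the claim becomes
\[
    \left(A - \sum_i a_i\right)\left(B - \sum_i b_i\right) \geq \left(C - \sum_i c_i\right)^{2}.
\]
I would attack this either by induction on $k$ — peeling off one cyclic flat to a connected split matroid with one fewer cyclic flat, with the uniform case above as base case, carrying a slightly strengthened inequality through the induction — or by substituting the explicit binomial expressions for $A,B,C$ and for $a_i, b_i, c_i$ (the latter from the Tutte polynomials of the $\N_i$) and estimating directly.

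The hard part will be exactly this last step. Even though the split conditions make the corrections additive over the cyclic flats, subtracting $\sum_i a_i$ and $\sum_i b_i$ from the two factors of the product $AB$ does not by itself preserve a lower bound by $\left(C - \sum_i c_i\right)^2$. What must be extracted from the structure of split matroids is the precise quantitative relation between the three corrections $a_i$, $b_i$, $c_i$ attached to a single cyclic flat — roughly, that $c_i$ is controlled by the appropriate combination of $a_i$, $b_i$, $A$ and $B$ — in a form robust enough to survive summation over $i$; reconciling this per-flat estimate with the global product inequality is where the main work lies.
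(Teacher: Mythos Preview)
Your reduction to the connected case is fine, and the additive decomposition
$T_{\M}(x,y) = T_{U_{r,n}}(x,y) - \sum_i D_i(x,y)$ that you assert is in fact true for connected split matroids (it is essentially the content of later work of the same authors on valuative invariants via relaxations of stressed subsets), though it is a nontrivial structural result that you would need to justify rather than simply invoke as ``the key consequence.''

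The genuine gap, however, is exactly where you yourself locate it. After the decomposition you are left with
\[
\Bigl(A - \sum_i a_i\Bigr)\Bigl(B - \sum_i b_i\Bigr) \;\geq\; \Bigl(C - \sum_i c_i\Bigr)^{2},
\]
and you offer no concrete estimate relating $a_i,b_i,c_i$ to $A,B,C$, nor any inductive invariant strong enough to survive peeling off a cyclic flat. Saying ``$c_i$ is controlled by the appropriate combination of $a_i,b_i,A,B$'' is a hope, not an argument: the product on the left does not behave monotonically under such subtractions, the per-flat corrections for cyclic flats of arbitrary rank are genuinely more intricate than in the paving case (where the analogous approach of Ch\'avez-Lomel\'i et al.\ already requires real work even with all $Z_i$ of rank $r-1$), and nothing you have written indicates how the split compatibility conditions would deliver the needed quantitative control. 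As it stands the proposal is a framework with its central step missing.

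The paper bypasses all of this with a different and much shorter argument. It runs a deletion--contraction induction on $\size{E}$: if some $e$ has both $\M\setminus\{e\}$ and $\M/\{e\}$ loopless and coloopless, both minors are split and smaller, and a standard Cauchy--Schwarz-type lemma pushes the inequality through the recursion $T_{\M}=T_{\M\setminus e}+T_{\M/e}$. The structural input from split matroids is used only to classify the matroids for which \emph{no} such $e$ exists: Proposition~3.1 shows these are exactly the matroids of rank or corank at most $2$, or minimal matroids $\T_{k,n}$. Those base cases are then dispatched by direct computation and by citing the known results for series-parallel or lattice path matroids. No explicit Tutte polynomial formula for general split matroids, and no quantitative per-flat estimate, is ever needed.
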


Moreover, we identify the cases in which equality is attained. The method we employ is a modification of the arguments used in \cite{chavez-merino-noble} and \cite{knauer-martinez-ramirez}. A key feature of connected split matroids that we discuss below is a structural property of their lattice of cyclic flats that we use as definition.

\section{The toolbox}
\noindent Throughout the rest of the paper, we assume familiarity with basic concepts in matroid theory; we follow the terminology and notation of \cite{oxley} and point to \cite{bonin-demier} for further details on the lattice of cyclic flats.

\subsection{Split matroids} For the sake of simplicity in the exposition, we will use an alternative definition of split matroids using their cyclic flats. 

\begin{defi}[Split matroids]\label{def:splitmatroid}
A \emph{connected split matroid} is a connected matroid $\M$ whose proper cyclic flats form a clutter, that is if $E$ denotes the ground set of $\M$ and $F_1\subsetneq F_2$ are two cyclic flats then
either $F_1 = \varnothing$ or $F_2 = E$.

We call a matroid a \emph{split matroid} if it is isomorphic to a direct sum in which exactly one of the summands is a connected split matroid and the remaining summands are uniform matroids.
\end{defi}

\begin{remark}
    Let us explain why the above definition is equivalent to the original definition of split matroids. Observe that the only connected matroids with loops or coloops are $\U_{0,1}$ and $\U_{1,1}$. In particular if the matroid $\M$ is connected and has cardinality at least $2$, then it is loopless and coloopless. From the equivalence between (iii) and (iv) in \cite[Theorem~11]{BercziEtAl}, it follows that if $\M$ is connected and split it is either $\U_{0,1}$ or $\U_{1,1}$, or it is loopless and coloopless and the proper cyclic flats of $\M$ form a clutter. Conversely, if $\M$ is a connected matroid whose proper cyclic flats form a clutter, then by the same equivalence $\M$ has to be a connected split matroid. The second part of the definition that includes disconnected matroids is precisely \cite[Proposition~15]{joswig-schroter}.
\end{remark}

The name ``split matroid'' stems from a polyhedral point of view of matroids. The facet defining equations of the matroid base polytope of a split matroid are compatible splits of a hypersimplex, namely, the base polytope of a uniform matroid. In this article we focus only on combinatorial properties. For a thorough and detailed exploration of the interplay between the combinatorics and the geometry of split matroids we refer to \cite{ferroni-schroter}.

A fact that we will leverage is that the class of split matroids is closed under duality and taking minors. A complete list of excluded minors for split matroids can be found in \cite{cameron-mayhew} by Cameron and Mayhew, although we will not require it here.

A basic and important example of a split matroid is the following.

\begin{example}[Minimal matroids]\label{example:minimal}
    Consider the graphic matroid induced by a cycle of length $k+1$ where one edge is replaced by $n-k$ parallel copies of that edge (see Figure~\ref{fig:minimal}).
    We denote the corresponding matroid of rank $k$ and ground set of size $n$ by $\T_{k,n}$.
    \begin{figure}[ht]
        \centering
        \begin{tikzpicture}  
    	[scale=0.08,auto=center,every node/.style={circle, fill=black, inner sep=2.3pt},every edge/.append style = {thick}] 
    	\tikzstyle{edges} = [thick];
        \graph  [empty nodes, clockwise, radius=1em,
        n=9, p=0.3] 
            { subgraph C_n [n=5,clockwise,radius=1.7cm,name=A]};
            
		\draw[edges] (A 3) edge[bend right=20] (A 4);
		\draw[edges] (A 3) edge[bend right=-20] (A 4);
    \end{tikzpicture}\caption{The underlying graph of a minimal matroid $\T_{4,7}$}\label{fig:minimal}
\end{figure}
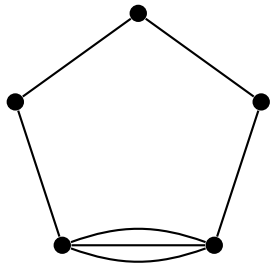
    In general, these matroids fail to be paving (or co-paving), but they are split matroids. 
    It is straightforward to check that the set of all the parallel edges is the only proper cyclic flat of the matroid $\T_{k,n}$ whenever $k>0$. 

    The matroid $\T_{k,n}$ is known in the literature by the name ``minimal matroid''.  Dinolt \cite{dinolt}  introduced that name for $\T_{k,n}$ since it is the only connected matroid (up to isomorphism) of rank $k$ and cardinality $n$ achieving the minimal number of bases $k(n-k)+1$. 
    
    Minimal matroids are lattice path matroids, which follows either by \cite[Theorem 3.2]{knauer-martinez-ramirez} or by \cite[Proposition 2.4]{ferroni2}. Moreover, notice that the dual of a minimal matroid is again a minimal matroid, that is $\T_{k,n}^* \cong \T_{n-k,n}$.
\end{example}

\subsection{Tutte polynomials and inequalities}

We will assume basic knowledge on Tutte polynomials of matroids. We refer to \cite{BrylawskiOxley:1992} for a thorough exposition regarding Tutte polynomials. In the following whenever we refer to the ``multiplicative Merino--Welsh conjecture'' we are speaking about the inequality Conjecture \ref{conj:mw}\eqref{eq:merinoWelsh}.

\begin{lemma}\label{lemma:duals}
    If $\M$ and $\N$ are matroids that satisfy the multiplicative Merino--Welsh conjecture, then both the dual matroid $\M^*$ and direct sum $\M\oplus \N$ satisfy the conjecture, too.
\end{lemma}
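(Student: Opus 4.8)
The plan is to reduce both assertions to two classical identities for the Tutte polynomial, combined with the non-negativity of the evaluations involved. Recall the reflection identity $T_{\M^*}(x,y) = T_{\M}(y,x)$ and the multiplicativity $T_{\M\oplus\N}(x,y) = T_{\M}(x,y)\cdot T_{\N}(x,y)$; both are standard and can be found in \cite{BrylawskiOxley:1992}. I would also first record that $\M^*$ has no loops and no coloops exactly when $\M$ does (loops and coloops are swapped under duality), and that $\M\oplus\N$ has no loops and no coloops exactly when neither summand does; so in either case the hypothesis ``without loops and coloops'' of the conjecture is inherited, and it suffices to check the inequality.

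For the dual, I would simply substitute into the reflection identity: $T_{\M^*}(2,0) = T_{\M}(0,2)$, $T_{\M^*}(0,2) = T_{\M}(2,0)$ and $T_{\M^*}(1,1) = T_{\M}(1,1)$. Then the desired inequality $T_{\M^*}(2,0)\cdot T_{\M^*}(0,2) \geq T_{\M^*}(1,1)^2$ is literally the hypothesis for $\M$ with the two factors swapped, hence holds.

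For the direct sum, I would multiply the two hypotheses together. Using multiplicativity,
\[
  T_{\M\oplus\N}(2,0)\cdot T_{\M\oplus\N}(0,2)
  = \bigl(T_{\M}(2,0)\, T_{\M}(0,2)\bigr)\cdot\bigl(T_{\N}(2,0)\, T_{\N}(0,2)\bigr).
\]
Since the coefficients of a Tutte polynomial are non-negative, every factor on the right-hand side is a non-negative real number, so the inequalities $T_{\M}(2,0)\,T_{\M}(0,2) \geq T_{\M}(1,1)^2$ and $T_{\N}(2,0)\,T_{\N}(0,2)\geq T_{\N}(1,1)^2$ may be multiplied to give
\[
  T_{\M\oplus\N}(2,0)\cdot T_{\M\oplus\N}(0,2)
  \;\geq\; T_{\M}(1,1)^2\, T_{\N}(1,1)^2
  \;=\; T_{\M\oplus\N}(1,1)^2,
\]
which is the claim.

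There is no genuine obstacle here; the only point that deserves a word of care is the non-negativity used to multiply the two inequalities in the direct-sum case, which is immediate from the non-negativity of the coefficients of the Tutte polynomial (equivalently, from the fact that the three evaluations count combinatorial structures). I would also note in passing that the identical reasoning shows the additive and the max versions of the conjecture are preserved under duality and direct sums, although only the multiplicative version is needed in what follows.
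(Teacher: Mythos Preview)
Your proof is correct and follows exactly the same route as the paper: both assertions are reduced to the identities $T_{\M^*}(x,y)=T_{\M}(y,x)$ and $T_{\M\oplus\N}(x,y)=T_{\M}(x,y)\,T_{\N}(x,y)$. You simply spell out a bit more detail (inheritance of the loopless/coloopless hypothesis and the non-negativity needed to multiply the inequalities) than the paper does.
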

\begin{proof}
    This is immediate from the fact that Tutte polynomials fulfill the relation $T_{\M^*}(x,y) = T_\M(y,x)$ and $T_{\M\oplus\N}(x,y) = T_{\M}(x,y)\cdot T_{\N}(x,y)$.
\end{proof}

\begin{lemma}\label{lemma:deletion-contraction}
    Let $\M$ be a loopless and coloopless matroid on the ground set $E$. If there is an element $e\in E$ such that both $\M/e$ and $\M\setminus e$ satisfy the multiplicative Merino--Welsh Conjecture, then the conjecture is fulfilled by the matroid $\M$.
\end{lemma}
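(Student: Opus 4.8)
The plan is to combine the deletion--contraction recurrence for the Tutte polynomial with the AM--GM inequality. Since $\M$ has no loops and no coloops, the chosen element $e$ is neither a loop nor a coloop, and hence $T_\M(x,y) = T_{\M\sm e}(x,y) + T_{\M/e}(x,y)$. First I would introduce abbreviations for the relevant evaluations, writing $a_1, b_1, c_1$ for the values of $T_{\M\sm e}$ at $(2,0)$, $(0,2)$, $(1,1)$ respectively, and $a_2, b_2, c_2$ for the corresponding values of $T_{\M/e}$. All six of these numbers are non-negative, since the Tutte polynomial of any matroid has non-negative coefficients, and the hypothesis says precisely that $a_1 b_1 \ge c_1^2$ and $a_2 b_2 \ge c_2^2$. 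The recurrence above then gives $T_\M(2,0) = a_1 + a_2$, $T_\M(0,2) = b_1 + b_2$ and $T_\M(1,1) = c_1 + c_2$.

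Next I would expand the difference that we want to show is non-negative:
\[
    T_\M(2,0)\,T_\M(0,2) - T_\M(1,1)^2
    = (a_1 b_1 - c_1^2) + (a_2 b_2 - c_2^2) + \bigl(a_1 b_2 + a_2 b_1 - 2 c_1 c_2\bigr).
\]
The first two terms on the right are non-negative by hypothesis, so it suffices to prove the single inequality $a_1 b_2 + a_2 b_1 \ge 2 c_1 c_2$. This I would deduce from AM--GM together with the monotonicity of the square root:
\[
    a_1 b_2 + a_2 b_1 \;\ge\; 2\sqrt{(a_1 b_1)(a_2 b_2)} \;\ge\; 2\sqrt{c_1^2 c_2^2} \;=\; 2 c_1 c_2,
\]
where the middle inequality uses $a_1 b_1 \ge c_1^2 \ge 0$ and $a_2 b_2 \ge c_2^2 \ge 0$, and the last equality uses $c_1, c_2 \ge 0$. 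Assembling the three pieces gives $T_\M(2,0)\,T_\M(0,2) \ge T_\M(1,1)^2$.

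I do not anticipate a genuine obstacle here; the argument is elementary once the recurrence is in place. The only points requiring care are the bookkeeping of non-negativity, which is exactly what makes the AM--GM step and the square-root step legitimate, and the tacit observation that the hypothesis already rules out the awkward choices of $e$: if either $\M\sm e$ or $\M/e$ had a loop or a coloop, then that minor would have one of $T(2,0)$, $T(0,2)$ equal to $0$ while $T(1,1)>0$, hence it would already violate the multiplicative Merino--Welsh inequality, so assuming that both satisfy it is consistent. In the larger picture this lemma supplies the inductive step for a deletion--contraction argument on split matroids, leaving only a controlled family of base cases (such as minimal and uniform matroids) to be verified directly.
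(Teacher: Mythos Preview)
Your argument is correct: the deletion--contraction recurrence combined with AM--GM (equivalently, Cauchy--Schwarz on $(\sqrt{a_1},\sqrt{a_2})$ and $(\sqrt{b_1},\sqrt{b_2})$) is exactly the standard way to prove this lemma. The paper itself does not spell out a proof but simply refers to \cite[Lemma~2.2]{noble-royle} and \cite[Lemma~3.3]{knauer-martinez-ramirez}, where precisely this computation appears; so your write-up supplies what the paper outsources, and nothing more needs to be said.
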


\begin{proof}
     See \cite[Lemma 2.2]{noble-royle} or \cite[Lemma 3.3]{knauer-martinez-ramirez}.
\end{proof}

\begin{remark}\label{rem:equality}
    The preceding two well-known results (more precisely, their proofs) imply that if $\M$ and $\N$ are matroids satisfying Conjecture \ref{conj:mw}\eqref{eq:merinoWelsh}, then $\M\oplus \N$ attains equality if and only if both $\M$ and $\N$ attain equality too. 
    Similarly, if $\M\setminus e$ and $\M/ e$ satisfy the multiplicative Merino--Welsh conjecture and for $\M$ the equality is attained, then $\M/ e $ and $\M\setminus e$ attain equality as well.
\end{remark}

\begin{lemma}\label{lemma:rank1}
    Every loopless and coloopless matroid of rank $1$ or corank $1$ satisfies the multiplicative Merino--Welsh conjecture.
\end{lemma}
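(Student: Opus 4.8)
The plan is to reduce the statement to a single explicit one-parameter family of matroids. A loopless matroid of rank $1$ on $n$ elements has all of its elements parallel, so it is the uniform matroid $U_{1,n}$; the coloopless hypothesis then forces $n\geq 2$, since for $n=1$ the unique element is a coloop. Dually, a loopless and coloopless matroid of corank $1$ is $U_{n-1,n}$ with $n\geq 2$, and because $U_{n-1,n}=U_{1,n}^{*}$, Lemma~\ref{lemma:duals} disposes of the corank-$1$ case as soon as the rank-$1$ case is settled. Hence it suffices to verify the multiplicative Merino--Welsh inequality for $U_{1,n}$ with $n\geq 2$.

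Next I would write down the three relevant Tutte evaluations explicitly. From the corank--nullity expansion one gets
\[
    T_{U_{1,n}}(x,y) = x + y + y^{2} + \cdots + y^{n-1},
\]
so that $T_{U_{1,n}}(2,0)=2$, $T_{U_{1,n}}(0,2) = 2 + 2^{2} + \cdots + 2^{n-1} = 2^{n}-2$, and $T_{U_{1,n}}(1,1) = n$ (the number of bases). Thus the inequality to be checked is
\[
    2\,(2^{n}-2)\;\geq\;n^{2}, \qquad\text{equivalently}\qquad 2^{n+1}-4\geq n^{2}.
\]

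This last inequality holds with equality at $n=2$ and strictly for every $n\geq 3$, which follows from a one-line induction: if $2^{n+1}-4\geq n^{2}$, then $2^{n+2}-4 = 2(2^{n+1}-4)+4 \geq 2n^{2}+4 \geq (n+1)^{2}$, the last step because $2n^{2}+4-(n+1)^{2} = (n-1)^{2}+2 > 0$. There is no genuine obstacle in this lemma; the only points requiring a little care are identifying the two families $U_{1,n}$ and $U_{n-1,n}$ correctly and observing that the coloopless (respectively loopless) hypothesis is precisely what rules out the degenerate one-element case. In particular the bound is sharp, equality already occurring for $U_{1,2}$.
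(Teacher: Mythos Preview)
Your proof is correct and follows essentially the same route as the paper: reduce to rank $1$ via Lemma~\ref{lemma:duals}, identify the matroid as $\U_{1,n}$ with $n\geq 2$, compute the three Tutte evaluations, and verify the resulting numerical inequality. The only cosmetic difference is in the last step, where the paper bounds $2(2^{n}-2)\geq 2^{n}\geq n^{2}$ while you run a direct induction on $2^{n+1}-4\geq n^{2}$; your induction is in fact a bit cleaner, since the intermediate bound $2^{n}\geq n^{2}$ used in the paper actually fails at $n=3$.
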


\begin{proof}
    By Lemma \ref{lemma:duals} it suffices to consider only the case in which the rank is $1$. Observe that a loopless and coloopless matroid of rank $1$ is a uniform matroid $\M\cong \U_{1,n}$ for some $n\geq 2$. Its Tutte polynomial is $T_{\M}(x,y) = x + y + y^2 + y^3 + \cdots + y^{n-1}$. Thus, $T_{\M}(1,1) = n$, whereas $T_{\M}(2,0) = 2$ and $T_{\M}(0,2) = 2^n - 2$. Therefore, 
    \begin{align*}
        T_{\M}(2,0)\cdot T_{\M}(0,2) &= 2\cdot (2^n-2)
        \geq n^2 = T_{\M}(1,1)^2
    \end{align*}
    where equality holds only if $n=2$. This follows from the estimation $2\cdot(2^{n}-2) > 2^n \geq n^2$ for all $n > 3$ and $2\cdot(2^n-2) = 12 > 9 = n^2$ if $n=3$.
\end{proof}

We observe that as an alternative to our arguments the preceding result also follows either from \cite{noble-royle} or \cite{knauer-martinez-ramirez} given that all loopless and coloopless matroids of rank or corank $1$ are series-parallel matroids and lattice path matroids. 

\begin{lemma}\label{lemma:rank2}
     Every loopless and coloopless matroid of rank $2$ or corank $2$ satisfies the multiplicative Merino--Welsh conjecture. Furthermore, equality occurs only for the matroid  $\U_{1,2}\oplus \U_{1,2}$.
\end{lemma}

\begin{proof}
    Again, by Lemma \ref{lemma:duals} it suffices to consider only the case in which the matroid has rank~$2$. Observe that whenever $\M$ has rank $2$, cardinality $n$, and does not contain loops and coloops, we have that the two coefficients $[x^2y^0] T_{\M}(x,y)$ and $[x^0 y^{n-2}]T_{\M}(x,y)$ are both equal to $1$ (see \cite[Proposition 6.2.13(ii) and (v)]{BrylawskiOxley:1992}). Hence, we obtain $T_{\M}(2,0)\cdot T_{\M}(0,2) \geq 2^2\cdot 2^{n-2}=2^n$.
    
    On the other hand, since $T_{\M}(1,1)$ is the number of bases of $\M$, this expression is bounded from above by the binomial coefficient $\binom{n}{2}$.
    The inequality $\binom{n}{2}^2 < 2^n$ holds for every $n\geq 13$. Hence, for all rank $2$ matroids on $n\geq 13$ elements we have
        \begin{equation*}
            T_{\M}(2,0)\cdot T_{\M}(0,2) \geq 2^n > \binom{n}{2}^2 \geq T_{\M}(1,1)^2.
        \end{equation*}
    We used a computer to verify this inequality for all matroids of rank $2$ and size $n\leq 12$ that have neither a loop or coloop.
    The multiplicative Merino--Welsh conjecture holds for these cases, and equality is obtained only if $\M\cong \U_{1,2}\oplus\U_{1,2}$. The code for our computations can be found in the Appendix~\ref{appendix}.
\end{proof}

\begin{remark}
    The preceding proof relies on the fact that the exponential growth of the term $2^n$ guarantees that it eventually dominates the polynomial term $\tbinom{n}{2}^2$ for large values of $n$. This property easily extends to matroids of higher fixed rank. However, as the rank grows, one has to deal separately with a growing finite number of cases that could potentially yield a counterexample to Conjecture \ref{conj:mw}\eqref{eq:merinoWelsh}. 
\end{remark}

\section{The proof of Theorem \ref{teo:main}}

\begin{prop}\label{prop:base-case}
    Let $\M$ be a connected split matroid without loops and coloops. Assume that for every $e\in E$ in the ground set $E$ of $\M$ either the deletion $\M/e$ or the contraction $\M\setminus e$ has a loop or a coloop. Then $\M$ satisfies one of the following, or both:
    \begin{enumerate}[i)]
        \item The rank or corank of $\M$ is either one or two.
        \item $\M$ is isomorphic to a minimal matroid.
    \end{enumerate}
\end{prop}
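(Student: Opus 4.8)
The statement is purely structural: we need to classify connected, loopless, coloopless split matroids $\M$ in which every single-element deletion $\M\setminus\{e\}$ or contraction $\M/\{e\}$ creates a loop or a coloop. I would first translate the deletion/contraction hypothesis into the language of cyclic flats, since that is the only structural data the definition of split matroid gives us control over. Recall that $\M\setminus\{e\}$ has a coloop iff $e$ lies in a cocircuit of size $2$ (a "parallel pair in $\M^*$"), equivalently $e$ is in no circuit of $\M$ after... more cleanly: $\M\setminus e$ has a coloop iff there is an element $f$ such that $\{e,f\}$ is contained in every basis-complement, i.e. $e$ and $f$ are in series; and $\M\setminus e$ has a loop iff $e$ is in parallel with some other element (so that deleting $e$ doesn't help, but actually a loop in $\M\setminus e$ would already be a loop in $\M$, contradiction — so $\M\setminus e$ can only acquire a \emph{coloop}). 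Dually, $\M/e$ is loopless unless $e$ is parallel to something, and $\M/e$ can only acquire a \emph{loop} (never a coloop, as contracting can't create coloops in a coloopless matroid... wait, it can: $\M/e$ has a coloop iff $e$ together with that element forms a cocircuit—no). Let me be careful: contracting $e$ in a coloopless matroid yields a coloop $f$ iff $\{e,f\}$ is a cocircuit of $\M$; deleting $e$ yields a loop $f$ iff $\{e,f\}$ is a circuit. So the hypothesis becomes: \emph{every element $e$ is either in a $2$-element circuit or in a $2$-element cocircuit of $\M$}. Equivalently, in the simplification every element has a "partner," with the leftover being series/parallel classes.

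Next I would use the clutter condition on proper cyclic flats. Since $\M$ is connected of rank $r$ on $n$ elements with $2 \le r \le n-2$ (else we are in case (i)), the cyclic flats form a lattice $\{\varnothing, Z_1, \dots, Z_k, E\}$ where the $Z_i$ are pairwise incomparable proper cyclic flats forming an antichain (this is precisely Definition~\ref{def:splitmatroid}). I would analyze where the $2$-element circuits and $2$-cocircuits can live. A $2$-circuit $\{e,f\}$ forces $e,f$ to lie in the same cyclic flats (they are in every cyclic flat that contains either), so parallel classes are "refined by" the $Z_i$'s; dually a $2$-cocircuit corresponds to a $2$-circuit of $\M^*$, and since split matroids are closed under duality with the cyclic flats of $\M^*$ being the complements of those of $\M$, series pairs behave symmetrically. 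The key tension: if $\M$ is \emph{not} isomorphic to a minimal matroid $\T_{r,n}$, I want to produce an element $e$ that is neither in a $2$-circuit nor a $2$-cocircuit, contradicting the hypothesis — so the proposition is really a characterization, and $\T_{r,n}$ (whose unique proper cyclic flat is the set of $n-r+1$ parallel edges, and which indeed has every element either in the big parallel class or a coloop... wait, $\T_{r,n}$ has no coloops; but each of the $r-1$ "cycle" elements forms a $2$-cocircuit with... hmm, actually in $\T_{r,n}$ each non-parallel edge is in series with the contracted parallel class? Let me trust that $\T_{r,n}$ does satisfy the hypothesis, as the proposition asserts) is the unique non-trivial example.

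So the real work is a case analysis on the cyclic flat lattice. I would argue: (a) if there are $\ge 2$ proper cyclic flats $Z_1, Z_2$, use incomparability plus connectivity to locate an element in $Z_1 \setminus Z_2$ that is "interior" — not parallel to anything outside, not in series with anything — by a counting/rank argument on $\rank(Z_1), |Z_1|$, and conclude we must be in case (i); (b) if there is exactly one proper cyclic flat $Z$, then $\M$ has a very rigid structure: $\M$ is obtained from $\U_{\rank Z, \text{something}}$ and the quotient, and the hypothesis that every $e \in E \setminus Z$ is in a $2$-cocircuit while every $e \in Z$ is in a $2$-circuit pins down $|Z| = n-r+1$, $\rank Z = ?$ and forces $\M \cong \T_{r,n}$; (c) if there are no proper cyclic flats, $\M$ is uniform $\U_{r,n}$, and the hypothesis forces $r \le 1$ or $r \ge n-1$, i.e. case (i). The main obstacle I anticipate is step (b)/(a): squeezing the exact structure of $\M$ out of "single cyclic flat + every element paired" — in particular showing that the complement of the cyclic flat must be a single series class and the flat itself a single parallel class, which requires carefully combining the circuit/cocircuit description with the connectivity of $\M$ and the flat/cyclic conditions, and handling the boundary ranks where cases (i) and (ii) overlap.
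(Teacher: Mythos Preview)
Your translation of the hypothesis is correct and matches the paper's first reduction: since $\M$ is loopless and coloopless, $\M\setminus e$ cannot acquire a loop and $\M/e$ cannot acquire a coloop, so the assumption is equivalent to ``every $e\in E$ lies in a $2$-circuit or a $2$-cocircuit of $\M$.''

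From that point on, the paper takes a more direct route than your cyclic-flat trichotomy. Assuming $3\le k\le n-3$, it picks (up to duality) an element $e$ with a parallel partner and sets $P=\cl_\M(e)$, a proper cyclic flat of rank~$1$. The key step---which your sketch does not isolate---is that the clutter condition forces $P$ to be the \emph{only} nontrivial parallel class: if some $f\notin P$ also had a parallel partner, then taking a circuit $C\ni e,f$ (which exists by connectivity) one produces a forbidden chain of proper cyclic flats, either $\varnothing\subsetneq P\subsetneq\cl_\M(\{e,f\})\subsetneq E$ or $\varnothing\subsetneq P\subsetneq\cl_\M(C)\subsetneq E$. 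Hence every $f\notin P$ must lie in a $2$-cocircuit; applying the same reasoning to $\M^*$ shows $E\setminus P$ is a single parallel class of $\M^*$, and the rank identity $1=\rk_{\M^*}(E\setminus P)=|E\setminus P|+\rk_\M(P)-k$ gives $|E\setminus P|=k$, whence $\M\cong\T_{k,n}$.

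Your case~(a) is precisely where this chain-of-cyclic-flats obstruction is needed, and your proposed ``counting/rank argument on $\rk(Z_1),|Z_1|$'' does not capture it: the point is not a size count but that two distinct rank-$1$ cyclic flats already generate a length-$3$ chain. Once you have that observation, your case~(a) collapses (it cannot occur when $3\le k\le n-3$) and your case~(b) becomes the paper's argument, since the unique proper cyclic flat must be the rank-$1$ flat $P$. So your plan is workable but less efficient; the paper avoids the trichotomy entirely by going straight to $P$ and invoking the chain obstruction once, then once more in the dual.
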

\begin{proof}
    Let us assume that the rank of the matroid $\M$ is $k$ and the ground set $E$ is of size~$n$.
    We are in the first case of the statement unless $3\leq k\leq n-3$. Thus we may assume that those inequalities hold. 
    Notice that a contraction of a coloopless matroid cannot lead to a matroid with a coloop and similarly a deletion in $\M$ cannot create loops, thus the assumptions in the statement imply that for every element $e\in E$ we have either $\M/e$ has a loop or $\M\setminus e$ has a coloop. We assume without loss of generality that there is at least one element $e$ in the ground set such that $\M/e$ has loops because, otherwise, we can exchange our matroid by its dual $\M^*$ which satisfies all assumptions as well.
    
    Now, fix an element $e\in E$ such that $\M/e$ has loops. Consider $P=\cl_{\M}(e)$, the set of all the elements in $\M$ that are parallel to $e$. This is of course a proper cyclic flat of $\M$. If $j=|P|-1\geq 1$ then
    \[
        \M/e \cong \U_{0,j}\oplus \U_{k-1,n-j-1}.
    \]
    This is the case as $P\setminus e$ is the ground set of the first direct summand. To justify the fact that the second direct summand is a uniform matroid, notice that since $\M$ is connected and split, there is only one cyclic flat of $\M$ strictly containing $P$, namely the ground set $E$. Recall that a set $F$ is a flat of $\M/e$ if and only if $F\cup e$ is a flat of $\M$.
    Hence, in $\M/e$ the only cyclic flat which strictly contains the rank $0$ flat $P\setminus e$ is $E\setminus e$. It is evident that the contraction of $\M$ by any element in $P$ has loops. 
    
    Suppose that some element $f\not\in P$ has a parallel element in $\M$, that is $\cl_\M(f)$ is a cyclic flat, and thus the closure of the two cyclic flats $\cl_\M(e)$ and $\cl_\M(f)$ is $\cl_{\M}(\{e,f\})$ which is a cyclic flat too. 
    This is a contradiction to the assumption that $\M$ is a split matroid, because  $\varnothing\subsetneq P = \cl_\M(e) \subsetneq \cl_\M(\{e,f\}) \subsetneq E$ is a chain of cyclic flats; to see why the last inclusion is strict recall that we are assuming that
    the matroid~$\M$ has at least rank $3$ while $\cl_\M(\{e,f\})$ is of rank $2$.
    We conclude that $f\notin P$ cannot have parallel elements in $\M$; therefore the contraction $\M/f$ is loopless for all elements $f\not\in P$.
    
    In other words, so far we have shown that the only elements $f\in E$ such that $\M/f$ has loops are exactly the elements $f\in P=\cl_{\M}(e)$. Since $\M$ has rank at least $3$ and $\rk_{\M}(P) = 1$, we have that $E\setminus P\neq\varnothing$. 
    Furthermore, our assumptions on $\M$ imply that for all elements $f\in E\setminus P$ we have that $\M\setminus f$ must contain a coloop. 
    Rephrased in terms of the dual matroid $\M^*$ the last conclusion is equivalent to the fact that for all $f\in E\setminus P$, the contractions $\M^*/f$ have loops. By applying the same reasoning as above to the matroid $\M^*$ we may derive that $E\setminus P$ is a parallel class of $\M^*$.
    
    Using our findings we obtain
    \[
    1 = \rank_{\M^*}(E\setminus P) = \size{E \setminus P} + \rank_\M(P)- k = \size{ E\setminus P} + 1 - k.
    \]
    And thus $\size{E\setminus P} = k$; hence, it has to be a basis of $\M$. Further $(E\setminus P) \cup\{e\}$ has to be a circuit of the matroid~$\M$ as this matroid contains no coloops. It follows that $\M$ is a minimal matroid.
\end{proof}

Now we are prepared to prove Theorem~\ref{teo:main}, the main result of this paper.

\begin{proof}[Proof of Theorem~\ref{teo:main}]
    We proceed by strong induction on the cardinality of the ground set of our matroid. Our hypothesis is that whenever the cardinality of the ground set of a loopless and coloopless split matroid is smaller than $n$, then the multiplicative Merino--Welsh conjecture holds. 
    Fix a loopless and coloopless split matroid $\M$ whose ground set is of size $n$. We may assume that $\M$ is connected, as otherwise we apply Lemma \ref{lemma:duals} and the induction hypothesis to conclude that the matroid fulfills the desired inequality. This is because split matroids are minor closed, thus each direct summand is a split matroid on a smaller ground set.
    
    If there is an element of the ground set such that both $\M/e$ and $\M\setminus e$ are loopless and coloopless, then we use Lemma \ref{lemma:deletion-contraction} and the induction hypothesis. Here we are using the fact that both $\M/e$ and $\M\setminus e$ are loopless and coloopless split matroids. 
    Now assume that there is no such element. Since $\M$ is connected, we are able to use Proposition \ref{prop:base-case} to conclude that $\M$ has rank or corank smaller than $2$, or that it is a minimal matroid. In the first cases, Lemma \ref{lemma:rank1} and Lemma \ref{lemma:rank2} show that the inequality is satisfied. 
    In the case that the matroid is a minimal matroid we obtain the desired inequality as a corollary of either \cite[Theorem 4.5]{noble-royle} or alternatively \cite[Corollary 3.4]{knauer-martinez-ramirez} since every minimal matroid is both a series-parallel matroid and a lattice path matroid (cf.\ Example \ref{example:minimal}).
\end{proof}

The next two results are consequences of Theorem~\ref{teo:main} and the above  Lemmas.

\begin{cor}
    Let $\M$ be a loopless and coloopless matroid that is isomorphic to a direct sum of split matroids. Then the multiplicative Merino--Welsh conjecture holds true for $\M$.
\end{cor}

\begin{cor} 
    Let $\M$ be a direct sum of split matroids without loops and coloops, then
    \begin{equation}\label{eq:equality}
        T_{\M}(2,0)\cdot T_{\M}(0,2) = T_{\M}(1,1)^2
    \end{equation}
    if and only if every connected direct sum component is isomorphic to the uniform matroid $\U_{1,2}$. 
\end{cor}

\begin{proof}
    We follow the guidance of the main proof.
    As mentioned in Remark \ref{rem:equality}, a matroid~$\M$ which is a direct sum of split matroids satisfies equation~\eqref{eq:equality} if and only if all of its direct sum components do. Thus we may focus on connected split matroids.
    To conclude the proof it suffices to show that in the only connected split matroid attaining equality is $\U_{1,2}$.
    
    First notice that by Lemma~\ref{lemma:rank2} there exists no connected matroid of rank or corank $2$ satisfying equation~\eqref{eq:equality}. Similarly, Lemma~\ref{lemma:rank1} tells us that the only connected matroid of rank or corank $1$ that fulfills the equation is $\U_{1,2}$. In particular, the only two  loopless and coloopless matroids $\U_{1,3}$ and $\U_{2,3}$ on a ground set of cardinality $3$ do not satisfy the equation.
    Moreover, 
    it follows from \cite[Corollary 3.4]{knauer-martinez-ramirez} that 
    there is no minimal matroid on more than two elements for which equation~\eqref{eq:equality} holds, because minimal matroids are connected lattice path matroids.
    
    Now we rule out the existence of connected split matroids on a ground set of size $n\geq 4$ and satisfying \eqref{eq:equality}. 
    Let us assume that $\M$ is such a matroid whose ground set is of minimal size.
    Since $\M$ does not have rank/corank at most $2$ nor it is minimal, we conclude by Proposition \ref{prop:base-case} that there is some element $e\in E$ with the property that $\M\setminus e$ and $\M/ e$ are loopless and coloopless. Furthermore, since $\M$ is connected, by \cite[Proposition 4.3.1]{oxley} at least one of the two matroids $\M\setminus e$ and $\M/e$ is connected.
    
    All three matroids $\M$, $\M\setminus e$ and $\M/e$ are split and hence satisfy the multiplicative Merino--Welsh conjecture. Remark \ref{rem:equality} tells us that 
    the fact that $\M$ attains equality implies the same for both $\M/ e$ and $\M\setminus e$.
    It follows that at least one of them is a connected split matroid on $n-1$ elements fulfilling equation~\eqref{eq:equality} which contradicts our assumption, or otherwise $n=3$ which is not possible either.
\end{proof}

We conclude by relating our results to some previous work in the literature.

\begin{remark}
    It follows from the work of Ch\'avez-Lomel\'i et al.\ \cite{chavez-merino-noble} that the additive version of the Merino--Welsh conjecture holds for paving matroids. Notice that our Theorem \ref{teo:main} extends this result.
    However, it is not known whether the assignment 
    $x\mapsto T_{\M}(2-x,x)$ is a convex function on $[0,2]$ for every split matroid $\M$.
\end{remark}

\begin{remark} 
    Even if the general scheme of the proof of Theorem \ref{teo:main} is similar in spirit to that of \cite{noble-royle} and \cite{knauer-martinez-ramirez}, the classes of matroids are quite different.
    The families of split matroids and series-parallel matroids are essentially disjoint, their only common members on ground set of size at least~$2$ are precisely the minimal matroids, the graphic matroids whose simplification is $\U_{2,3}$ and their duals; this follows from \cite[Theorem 13]{BercziEtAl} as series-parallel matroids are graphic and hence binary. On the other hand, we also point out that neither of the classes of lattice path matroids and split matroids contain one another. 
    For example the Catalan matroid $C_3$ having rank $3$ and a ground set of size $6$ is a lattice path matroid that is not split because it has a chain of two proper cyclic flats. Conversely, the V\'amos matroid is split (it is in fact sparse paving) but it is not a lattice path matroid as it is not representable over any field. By using the notion of stressed subset relaxations of \cite{ferroni-schroter} it should be possible to identify those matroids that are both lattice path and split matroids; this class strictly contains all \emph{cuspidal matroids} (see \cite[Proposition 3.24]{ferroni-schroter}).
\end{remark}

\section*{Acknowledgements}
The authors thank the reviewers of this article for their useful comments. LF is supported by the Swedish
Research Council grant 2018-03968. BS thanks the Knut and Alice Wallenberg Foundation for the support.

\appendix 
\section{Sage Code}\label{appendix}
The purpose of this appendix is to provide the code of two simple functions that allow to verify Lemma~\ref{lemma:rank2} for matroids with a small ground set using Sage \cite{sagemath}.  
\begin{lstlisting}
# The following routine produces the list of all loopless
# matroids of rank 2 with ground set [n] (up to isomorphism).
# Note that the set of hyperplanes of a loopless rank 2 matroid 
# forms a partition of the ground set
def all_rank_2_matroids(n):
    ans = []
    for P in Partitions(n):
        if(len(P) >= 2):
            circs = []
            i = 1
            for p in P:
                circ = list(range(1, n + 1))
                for j in range(i, i + p):
                    circ.remove(j)
                i = i + p
                circs.append(circ)
            ans.append(Matroid(circuits=circs).dual())
    return ans

# The following verifies that all loopless 
# and coloopless matroids of rank 2 with ground set [n] 
# (up to isomorphism) satisfy the multiplicative Merino-Welsh conjecture 
def check_mw_rank2(n):
    mat = all_rank_2_matroids(n)
    for M in mat:
        if(len(M.coloops()) > 0):
            continue
        else:
            T = M.tutte_polynomial()
            d = T(2,0) * T(0,2) - T(1,1)^2
            if(d < 0):
                return false
            if(d == 0):
                print("Matroids with equality: ", M)
    return true
    
# Testing the conjecture for rank 2 matroids of size at most 12
for n in range(2, 13):
    print("size ", n, " Merino-Welsh conjecture ", check_mw_rank2(n))

size  2   Merino-Welsh conjecture  True
size  3   Merino-Welsh conjecture  True
Matroids with equality:  Matroid of rank 2 on 4 elements with 4 bases
size  4   Merino-Welsh conjecture  True
size  5   Merino-Welsh conjecture  True
size  6   Merino-Welsh conjecture  True
size  7   Merino-Welsh conjecture  True
size  8   Merino-Welsh conjecture  True
size  9   Merino-Welsh conjecture  True
size  10   Merino-Welsh conjecture  True
size  11   Merino-Welsh conjecture  True
size  12   Merino-Welsh conjecture  True
\end{lstlisting}

\bibliographystyle{abbrv}
\bibliography{bibliography}

\end{document}